\newtheorem{theorem}{Theorem}[section]
\newtheorem{lemma}[theorem]{Lemma}
\numberwithin{equation}{section}
\theoremstyle{remark}
\newtheorem*{remark}{Remark}
\DeclareMathOperator{\li}{li}
\def\reals{\hbox{\rm I\kern-.18em R}}
\def\complexes{\hbox{\rm C\kern-.43em
\vrule depth 0ex height 1.4ex width .05em\kern.41em}}
\def\field{\hbox{\rm I\kern-.18em F}} 
\begin{document}

\title[On the average value of $\pi(t)-\li(t)$]{On the average value of $\pi(t)-\li(t)$}

\author{Daniel R. Johnston}
\address{School of Science, The University of New South Wales, Canberra, Australia}
\email{daniel.johnston@adfa.edu.au}
\date\today
\keywords{}

\begin{abstract}
    We prove that the Riemann hypothesis is equivalent to the condition $\int_{2}^x\left(\pi(t)-\li(t)\right)\mathrm{d}t<0$ for all $x>2$. Here, $\pi(t)$ is the prime-counting function and $\li(t)$ is the logarithmic integral. This makes explicit a claim of Pintz (1991). Moreover, we prove an analogous result for the Chebyshev function $\theta(t)$ and discuss the extent to which one can make related claims unconditionally.
\end{abstract}

\maketitle

\section{Introduction}
Let $\pi(x)$ denote the number of primes less than or equal to $x$ and 
\begin{equation*}
    \li(x)=\int_0^x\frac{1}{\log t}\mathrm{d}t.
\end{equation*}
In his celebrated 1859 article, Riemann \cite{Riemann_1859} remarked on the apparent truth of the inequality
\begin{equation*}
    \pi(x)<\li(x),
\end{equation*}
for all $x\geq 2$.
In 1903, Schmidt \cite[p. 204]{schmidt1903anzahl} showed that such a result would imply the Riemann hypothesis. However, in 1914 Littlewood \cite{littlewood1914distribution} managed to prove that $\pi(x)-\li(x)$ changes sign infinitely often. More precisely, he showed that for some positive constant $c$, there are arbitrarily large values of $x$ such that
\begin{equation*}
    \pi(x)-\li(x)>\frac{c\sqrt{x}\log\log\log x}{\log x}\quad\text{and}\quad\pi(x)-\li(x)<-\frac{c\sqrt{x}\log\log\log x}{\log x}.
\end{equation*}
It is an open problem to determine the smallest value of $x$ such that $\pi(x)>\li(x)$. Large computations have shown that $\pi(x)<\li(x)$ for all $x\leq 10^{19}$ \cite[Theorem 2]{buthe2018analytic} and that the first sign change occurs before $x=1.4\cdot 10^{316}$ \cite{bays2000new}, \cite{saouter2015still}.

Although $\pi(x)<\li(x)$ is not true in general, one can ask whether, in a precise sense, $\pi(x)<\li(x)$ is true on average. Namely, several authors \cite{ingham1932distribution}, \cite{kaczorowski1985sign}, \cite{pintz1991assertion}, \cite{stechkin1996asymptotic}, \cite{karatsuba2004riemann} assert that the Riemann hypothesis implies
\begin{equation}\label{averageeq}
    \int_{2}^x\left(\pi(t)-\li(t)\right)\mathrm{d}t<0,\quad x>x_0,
\end{equation}
for some sufficiently large $x_0$. Pintz \cite{pintz1991assertion} claims that \eqref{averageeq} is in fact equivalent to the Riemann hypothesis and is likely to hold for all $x>2$ under such assumptions. Using explicit bounds on prime counting functions we are able to prove this claim.

\begin{theorem}\label{rhthm1}
    The Riemann hypothesis is equivalent to the condition
    \begin{equation}
        \int_{2}^x\left(\pi(t)-\li(t)\right)\mathrm{d}t<0,\quad\text{for all}\ x>2.\label{piineq}
    \end{equation}
\end{theorem}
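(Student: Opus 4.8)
The plan is to establish the two implications separately; the forward one, that the Riemann hypothesis implies \eqref{piineq}, carries essentially all of the work.

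\emph{From the Riemann hypothesis to \eqref{piineq}.} I would start from the partial summation identity against $\theta(t)=\sum_{p\le t}\log p$,
\[
\pi(x)-\li(x)=\frac{\theta(x)-x}{\log x}+\int_2^x\frac{\theta(t)-t}{t\log^2 t}\,\mathrm{d}t+C,\qquad C=\frac{2}{\log 2}-\li(2),
\]
together with $\theta(x)=\psi(x)-\sqrt{x}+O(x^{1/3})$, where the higher prime powers are handled sharply via $\sum_{k\ge 3}\theta(x^{1/k})\le(1+o(1))\,\theta(x^{1/3})$ rather than with a wasteful logarithmic factor. Integrating over $[2,x]$, the term $-\sqrt t/\log t$ contributes the genuinely negative main term $-\int_2^x\sqrt t/\log t\,\mathrm{d}t\le -\tfrac23 x^{3/2}/\log x+O(1/\log x)$ — this is the integrated form of Riemann's $-\tfrac12\li(\sqrt x)$ bias — while the fluctuating part reduces to $\int_2^x\frac{\psi(t)-t}{\log t}\,\mathrm{d}t$ plus genuinely lower-order terms.

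\emph{Controlling the fluctuation.} The crucial input is an explicit form of the once-integrated explicit formula: from
\[
\int_1^x(\psi(t)-t)\,\mathrm{d}t=-\sum_\rho\frac{x^{\rho+1}-1}{\rho(\rho+1)}-(x-1)\log 2\pi-\frac12\int_1^x\log(1-t^{-2})\,\mathrm{d}t,
\]
the Riemann hypothesis gives $|x^{\rho+1}|=x^{3/2}$, hence $\bigl|\int_2^x(\psi(t)-t)\,\mathrm{d}t\bigr|\le c_0 x^{3/2}+O(x)$ with $c_0:=\sum_\rho\frac{1}{|\rho(\rho+1)|}$. The point is that $c_0$ is a \emph{small} explicit constant: using $|\rho+1|\ge|\rho|$ and, on RH, $|\rho|^2=\rho(1-\rho)$, one has $c_0\le\sum_\rho\frac{1}{\rho(1-\rho)}=2+\gamma-\log(4\pi)<0.05$, where $\gamma$ is Euler's constant. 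One integration by parts turns the bound on $\int_2^x(\psi(t)-t)\,\mathrm dt$ into $\bigl|\int_2^x\frac{\psi(t)-t}{\log t}\,\mathrm{d}t\bigr|\le \frac{c_0 x^{3/2}}{\log x}+O\!\left(\frac{x^{3/2}}{\log^2 x}\right)+O\!\left(\frac{x}{\log x}\right)$. Collecting everything,
\[
\int_2^x\bigl(\pi(t)-\li(t)\bigr)\,\mathrm{d}t\le\frac{x^{3/2}}{\log x}\Bigl(-\tfrac23+c_0+o(1)\Bigr),
\]
and since $\tfrac23-c_0>0.6$ the right side is negative for $x$ past an explicit threshold $x_1$. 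The main obstacle is to keep every implied constant — especially in the prime-power and remainder contributions — tracked tightly enough to force $x_1\le 10^{19}$. Granting that, \eqref{piineq} for $2<x\le x_1$ is immediate, since $\pi(t)<\li(t)$ for all $2<t\le 10^{19}$ by \cite[Theorem 2]{buthe2018analytic}, so the integrand is negative; this completes the forward implication.

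\emph{From \eqref{piineq} to the Riemann hypothesis.} This is the classical oscillation argument. Assume \eqref{piineq} for all $x>2$, so $F(x):=-\int_2^x(\pi(t)-\li(t))\,\mathrm{d}t\ge 0$. Since $F(x)=O(x^2)$, the Mellin transform $\int_2^\infty F(x)x^{-s-1}\,\mathrm{d}x$ has a finite abscissa of convergence $\sigma_c$, and by Landau's theorem $\sigma_c$ is a (necessarily real) singularity of the transform, which is analytic for $\Re s>\sigma_c$. Integrating by parts identifies this transform, up to the factor $1/s$ and entire terms, with $\int_2^\infty(\pi(x)-\li(x))x^{-s}\,\mathrm{d}x$, which continues through $\sum_p p^{1-s}=\sum_{n\ge1}\frac{\mu(n)}{n}\log\zeta(n(s-1))$ and an exponential-integral term arising from $\li$. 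Its singularities of real part $\ge\tfrac32$ are the real point $s=\tfrac32$ (from the $n=2$ term, i.e.\ the pole of $\zeta$, equivalently the $-\tfrac12\li(\sqrt x)$ bias) and the non-real branch points $s=\rho+1$ at nontrivial zeros $\rho$. Hence if the Riemann hypothesis failed there would be a zero $\rho_0$ with $\Re\rho_0>\tfrac12$, giving a singularity at $s=\rho_0+1$ and forcing $\sigma_c\ge\Re\rho_0+1>\tfrac32$; but then $s=\sigma_c$ would be a real singularity of real part $>\tfrac32$, of which there are none. This contradiction yields the Riemann hypothesis, completing the equivalence.
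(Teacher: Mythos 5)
Your proposal is correct in outline, and in the forward direction it is essentially the paper's argument: dispose of $2<x\le 10^{19}$ with B\"uthe's computation (Lemma \ref{buthelem}), and for large $x$ play the dominant negative term $-\tfrac23 x^{3/2}/\log x$, coming from the $\sqrt t$ gap between the prime-counting function and its prime-power-weighted companion, against a fluctuation controlled by the once-integrated explicit formula with zero-sum constant about $0.046$. The differences are of route rather than substance: you pass through $\theta$ and $\psi$ where the paper passes through $\Pi$ and Ingham's identity (Lemmas \ref{pililemma} and \ref{explicitlemma}), and your closed form $\sum_\rho 1/(\rho(1-\rho))=2+\gamma-\log 4\pi$ is a clean substitute for the paper's cited value of $\sum_\rho 1/\gamma^2$; on the other hand, you only assert that the implied constants can be tracked so that the threshold lands below $10^{19}$, whereas the paper actually does this bookkeeping by splitting at $3000$ and computing $\int_2^{3000}$ numerically (Lemma \ref{exactints}) — with a margin of $\tfrac23-0.05$ this is routine, so it is an omission of execution rather than of ideas (minor slip: the error in $-\int_2^x \sqrt t/\log t\,\mathrm{d}t\le -\tfrac23 x^{3/2}/\log x+O(1)$ is $O(1)$, not $O(1/\log x)$). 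In the converse direction your packaging is genuinely different: you apply Landau's theorem directly to the Mellin transform of $-\int_2^x(\pi(t)-\li(t))\,\mathrm{d}t$, continued through $\sum_n\frac{\mu(n)}{n}\log\zeta(n(s-1))$ and the $\li$-transform, and argue there is no real singularity of real part exceeding $3/2$; the paper instead proves an $\Omega_\pm(x^{1+\omega-\delta})$ oscillation for $\int_2^x(\Pi(t)-\li(t))\,\mathrm{d}t$ (Lemma \ref{signchangelem}, with $c=0$) and transfers it to $\pi$ via $\pi(t)>\Pi(t)-1.9t^{1/2}$, so the $O(x^{3/2})$ discrepancy is visibly dominated by $x^{\kappa}$ with $\kappa>3/2$. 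Your version is viable, but the load-bearing step you mention only in passing — the cancellation at $s=2$ between the $n=1$ term $\log\zeta(s-1)$ and the exponential-integral term from $\li$ — must actually be verified, since without it $\sigma_c=2$ would be consistent and no contradiction with Landau's theorem arises; this is classical but it is precisely the bookkeeping the paper's route through $\Pi_{1,c}$ and $\log\zeta$ avoids, at the cost of invoking the standard oscillation argument \emph{mutatis mutandis}.
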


We also prove an analogous result for the Chebyshev function
\begin{equation*}
    \theta(x)=\sum_{p\leq x}\log p.
\end{equation*}

\begin{theorem}\label{rhthm2}
    The Riemann hypothesis is equivalent to the condition
    \begin{equation}
        \int_{2}^x\left(\theta(t)-t\right)\mathrm{d}t<0,\quad\text{for all}\ x>2.\label{thetaineq}
    \end{equation}
\end{theorem}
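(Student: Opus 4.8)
The plan is to work with the explicit formula for the integral of $\theta(t)-t$ and split the argument into the two implications. The starting point is the classical relation, valid for $x$ not a prime power,
\[
\psi(x) = x - \sum_{\rho}\frac{x^{\rho}}{\rho} - \log(2\pi) - \tfrac12\log(1-x^{-2}),
\]
where $\psi(x)=\sum_{p^k\le x}\log p$, together with the standard estimate $\psi(x)-\theta(x)=\sqrt{x}+O(x^{1/3})$. Integrating from $2$ to $x$ turns the oscillatory sum into $\sum_{\rho} x^{\rho+1}/(\rho(\rho+1))$, which converges absolutely when $\mathrm{Re}(\rho)<1$ because $|\rho(\rho+1)|\gg |\rho|^2$ and $\sum_\rho |\rho|^{-2}<\infty$. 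So one obtains a clean identity of the shape
\[
\int_2^x(\theta(t)-t)\,\mathrm{d}t = -\tfrac23 x^{3/2} - \sum_{\rho}\frac{x^{\rho+1}}{\rho(\rho+1)} + (\text{lower order}),
\]
where the $-\tfrac23x^{3/2}$ comes from the $\psi-\theta$ discrepancy (i.e.\ from $-\int_2^x\sqrt t\,\mathrm{d}t$).

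For the direction ``RH $\Rightarrow$ \eqref{thetaineq}'', assume all $\rho=\tfrac12+i\gamma$. Then each term $x^{\rho+1}/(\rho(\rho+1))$ has modulus $x^{3/2}/|\rho(\rho+1)|$, so the whole sum over $\rho$ is bounded in absolute value by $x^{3/2}\sum_\rho|\rho(\rho+1)|^{-1}$, a convergent constant times $x^{3/2}$ which one can bound numerically to be smaller than $\tfrac23$ in coefficient (this is the crucial numerical input, analogous to how small $\sum|\rho|^{-1}$-type sums are controlled via $\sum 1/(\tfrac14+\gamma^2)=2+\gamma_{\mathrm{Euler}}-\log 4\pi\approx 0.046$). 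Hence the right-hand side is $\le -(\tfrac23-\varepsilon)x^{3/2}+(\text{lower order})<0$ for all large $x$, and one clears the remaining finite range $2<x\le X_0$ by direct computation, using known verification that $\theta(t)<t$ fails only mildly and that the integral stays negative — this is where an explicit bound such as those cited from \cite{buthe2018analytic} enters.

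For the converse, suppose RH is false, so some zero has $\beta=\mathrm{Re}(\rho_0)>\tfrac12$; by the functional equation we may take $\beta\ge\tfrac12$ with $\beta>\tfrac12$ strictly and let $\Theta=\sup_\rho\mathrm{Re}(\rho)>\tfrac12$. Then an Ingham-type / Landau oscillation argument applied to $\int_2^x(\theta(t)-t)\,\mathrm{d}t$ shows this quantity is $\Omega_\pm(x^{\Theta+1})$, which in particular is positive for arbitrarily large $x$, contradicting \eqref{thetaineq}. Concretely one feeds the Dirichlet series / Mellin transform of the integral into a standard tauberian oscillation theorem, noting that $\Theta+1>\tfrac32$ dominates the $-\tfrac23x^{3/2}$ main term.

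The main obstacle is the first direction: making the constant comparison genuinely effective for \emph{all} $x>2$, not just asymptotically. This requires (i) a fully explicit version of the integrated explicit formula with an honest error term — presumably obtained by truncating the sum over $\rho$ at height $T$, bounding the tail via zero-density/counting estimates $N(T)\sim \tfrac{T}{2\pi}\log\tfrac{T}{2\pi}$, and controlling the truncation error — and (ii) a rigorous numerical evaluation of $\sum_\rho 1/|\rho(\rho+1)|$ (or the relevant real-part sum) sharp enough to beat $\tfrac23$, combined with a finite check on a modest initial interval using existing explicit bounds on $\theta$. I expect the bookkeeping in (i), rather than any conceptual difficulty, to be the bulk of the work, exactly as in the companion Theorem \ref{rhthm1}.
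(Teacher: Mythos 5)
Your proposal follows essentially the same route as the paper: the integrated explicit formula for $\psi$ (Ingham's Theorem 27) with the numerical input $\sum_\rho \gamma^{-2}\approx 0.0462$ under RH, the $\psi-\theta$ discrepancy of size $\sqrt{t}$ supplying the dominant negative term of order $\tfrac{2}{3}x^{3/2}$, B\"uthe's computation for the initial range, and a Landau--Ingham oscillation argument (exponent $1+\omega-\delta>3/2$) for the converse. The only simplifications in the paper relative to your plan are that no truncation at height $T$ is needed, since the integrated zero sum converges absolutely and the formula is exact, and that the finite range is trivial because $\theta(t)<t$ pointwise for all $t\le 10^{19}$ while the explicit bound $\left|\int_2^x(\psi(t)-t)\,\mathrm{d}t\right|\le 0.08x^{3/2}$ already holds for all $x\ge 3000$.
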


It is natural to ask whether a modification of \eqref{piineq} or \eqref{thetaineq} is true unconditionally. In this direction we consider the weighted integrals
\begin{equation}\label{weightedeq}
    I_1(x,f)=\int_2^x(\pi(t)-\li(t))f(t)\:\mathrm{d}t\quad\text{and}\quad I_2(x,f)=\int_2^x(\theta(t)-t)f(t)\:\mathrm{d}t
\end{equation}
for some choice of function $f(t)$. Given that one has to go quite far to find a value of $t$ such that\footnote{The first value of $t$ with $\theta(t)-t>0$ is also expected to be around $10^{316}$, see \cite{platt2016first}.} $\pi(t)-\li(t)>0$, one should intuitively take $f(t)$ to be positive and decreasing as to give more weight to the negative bias for small values of $t$. In \cite{pintz1991assertion}, Pintz considers $f(t)=\exp(-\log^2t/y)$ for sufficiently large $y$. However, using an explicit form of Mertens' theorems, we show that $I_1(x,f)<0$ and $I_2(x,f)<0$ for the simpler and asymptotically larger function $f(t)=1/t^2$. Analogous results also hold for the prime-counting functions
\begin{equation*}
    \psi(x)=\sum_{p^m\leq x}\log p\quad\text{and}\quad\Pi(x)=\sum_{p^m\leq x}\frac{1}{m}.
\end{equation*}

\begin{theorem}\label{tsquaredthm}
    Unconditionally, we have
    \begin{align*}
        &\int_{2}^x\frac{\pi(t)-\li(t)}{t^2}\mathrm{d}t<0,\qquad \int_{2}^x\frac{\theta(t)-t}{t^2}\mathrm{d}t<0,\\
        &\int_{2}^x\frac{\Pi(t)-\li(t)}{t^2}\mathrm{d}t<0,\qquad \int_{2}^x\frac{\psi(t)-t}{t^2}\mathrm{d}t<0
    \end{align*}
    for all $x>2$.
\end{theorem}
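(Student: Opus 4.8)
The plan is to reduce each of the four integrals to a closed-form expression built from Mertens-type sums, and then to control that expression using explicit forms of Mertens' theorems.

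First I would write each of $\psi,\theta,\pi,\Pi$ as a nonnegative sum of indicator functions, so that Tonelli's theorem permits interchanging summation and integration. Combined with $\int_a^x t^{-2}\,dt=a^{-1}-x^{-1}$ (for $2\le a\le x$), with $\int_2^x t^{-1}\,dt=\log(x/2)$, and with the integration-by-parts identity $\int_2^x\li(t)t^{-2}\,dt=\log\log x-\log\log 2-\li(x)/x+\li(2)/2$, this gives for every $x>2$
\begin{align*}
\int_2^x\frac{\psi(t)-t}{t^2}\,dt&=\sum_{n\le x}\frac{\Lambda(n)}{n}-\log x-\frac{\psi(x)}{x}+\log 2,\\
\int_2^x\frac{\Pi(t)-\li(t)}{t^2}\,dt&=\sum_{p^m\le x}\frac{1}{mp^m}-\log\log x-\frac{\Pi(x)-\li(x)}{x}+\log\log 2-\frac{\li(2)}{2},
\end{align*}
and analogous identities for the $\theta$ and $\pi$ integrals, with $\sum_{p\le x}\tfrac{\log p}{p}$ and $\sum_{p\le x}\tfrac1p$ replacing the sums above.

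Next, since $\theta(t)\le\psi(t)$ and $\pi(t)\le\Pi(t)$ for all $t$, the $\theta$-integral is everywhere dominated by the $\psi$-integral and the $\pi$-integral by the $\Pi$-integral, so it suffices to prove the $\psi$ and $\Pi$ statements. For those I would insert explicit forms of Mertens' theorems — $\sum_{n\le x}\Lambda(n)/n=\log x-\gamma+o(1)$, and $\sum_{p^m\le x}1/(mp^m)=\log\prod_{p\le x}(1-1/p)^{-1}+o(1)=\log\log x+\gamma+o(1)$ — together with explicit prime-counting estimates ($\psi(x)/x\to1$ and $(\Pi(x)-\li(x))/x\to0$). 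These show the $\psi$-integral tends to $\log 2-1-\gamma\approx-0.88$ and the $\Pi$-integral to $\gamma+\log\log 2-\tfrac12\li(2)\approx-0.31$, both negative. To promote these limits to strict negativity for \emph{every} $x>2$, I would fix an explicit cutoff $X$: for $x\ge X$, substitute the explicit (decreasing) error terms (Rosser--Schoenfeld/Dusart-type bounds for the Mertens sums and for $\psi(x)-x$ and $\pi(x)-\li(x)$) into the closed forms and check the resulting inequalities; for $2<x<X$, use that the integral vanishes at $x=2$, is continuous, and is strictly decreasing on any sub-interval where the integrand is negative, so that — taking $X$ small enough that $\psi(t)<t$ (resp.\ $\Pi(t)<\li(t)$) holds on $(2,X]$ — the integral is negative there.

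The main obstacle is the $\Pi$ case, for two intertwined reasons. First, its limiting value $\gamma+\log\log 2-\tfrac12\li(2)\approx-0.31$ is far closer to $0$ than those of the other three, so the explicit error terms entering the closed form must together remain below about $0.31$. Second, the cutoff $X$ cannot be taken large: unlike $\pi(t)-\li(t)$, $\psi(t)-t$, and $\theta(t)-t$, which stay negative for $t$ up to $10^{19}$ and beyond, the quantity $\Pi(t)-\li(t)$ turns positive already for moderately-sized $t$, so past that first crossing the $\Pi$-integral is no longer monotone and its negativity cannot be read off from the integrand; beyond $X$ it must instead be extracted from the closed form using explicit bounds on $\li(t)-\Pi(t)$ (via $\li(t)-\pi(t)$ together with $\Pi(t)-\pi(t)=O(\sqrt t)$) that are sharp and effective from a fairly modest point. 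By contrast the $\psi$ and $\theta$ statements fall out of the closed form almost at once, using only that $\psi(t)<t$ over a large initial range.
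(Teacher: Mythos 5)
Your proposal is correct in outline and rests on the same two pillars as the paper --- partial summation converting $\int_2^x g(t)t^{-2}\,\mathrm{d}t$ into Mertens-type sums, plus explicit forms of Mertens' theorems --- but it runs the reduction in the opposite direction. The paper proves the $\pi$ and $\theta$ inequalities \emph{directly}, where the margins are largest (the closed forms tend to $B+\log\log 2-\tfrac12\li(2)\approx-0.62$ and $E+\log 2-1\approx-1.64$, using Rosser--Schoenfeld (3.20) and (3.23) together with $\pi(x)>x/\log x$, $\theta(x)>x-x/\log x$ and $\li(x)<x/\log x+2x/\log^2 x$), and then \emph{transfers} to $\Pi$ and $\psi$ via the explicit bounds $\Pi(x)-\pi(x)<1.9x^{1/2}$ and $\psi(x)-\theta(x)<1.5x^{1/2}$: since $\int t^{1/2}t^{-2}\,\mathrm{d}t$ converges, the transfer costs only a bounded constant (about $0.56$ resp.\ $0.8$), which fits inside those margins. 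You instead dispose of $\pi$ and $\theta$ trivially by domination ($\pi\le\Pi$, $\theta\le\psi$) and attack the harder $\Pi$ and $\psi$ cases head-on, which forces you to work with the smaller limiting values $\gamma+\log\log 2-\tfrac12\li(2)\approx-0.31$ and $\log 2-1-\gamma\approx-0.88$ and to control $(\li(x)-\Pi(x))/x$ unconditionally; this is doable (e.g.\ $(\li(x)-\Pi(x))/x\le(\li(x)-\pi(x))/x<2/\log^2x$ from the same Chebyshev-type bounds, and the prime-power tail turns (3.20) into $\sum_{p^m\le x}1/(mp^m)<\log\log x+\gamma+1/\log^2x$), so your route closes with a fairly small cutoff, at the price of a more delicate numerical margin exactly where you predicted. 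What your version buys is a cleaner logical reduction and closed forms whose limits are transparent; what the paper's version buys is that all the explicit estimates are applied only where the negative bias is strongest.

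One caveat: your treatment of the initial range leans on the integrand being negative there, but $\psi(t)-t$ is already positive on short intervals near $t\approx 19$ and $t\approx 31$, and (as you noted) $\Pi(t)-\li(t)$ first turns positive below $t=200$. So either take the cutoff $X$ genuinely small (for $\psi$ one can get the closed form negative for $x\ge 7$ using $\sum_{n\le x}\Lambda(n)/n\le\log x$, and $\psi(t)<t$ on $(2,7]$), or simply verify the integrals numerically on a bounded range, which is exactly what the paper does for $2<x\le 200$. As written, ``strictly decreasing on any sub-interval where the integrand is negative'' does not by itself dispose of the sub-intervals where it is positive.
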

Note that since $\pi(t)<\Pi(t)$ and $\theta(t)<\psi(t)$, the apparent negative bias of $\Pi(t)-\li(t)$ and $\psi(t)-t$ is less pronounced than that of $\pi(t)-\li(t)$ or $\theta(t)-t$. In fact, $\int_2^x\Pi(t)-\li(t)\:\mathrm{d}t$ and $\int_2^x\psi(t)-t\:\mathrm{d}t$ change sign infinitely often (see Lemma \ref{signchangelem}). Thus, unlike as in Theorem \ref{tsquaredthm}, Theorems \ref{rhthm1} and \ref{rhthm2} do not hold if $\pi(t)$ and $\theta(t)$ are replaced with $\Pi(t)$ and $\psi(t)$ respectively.

Finally, we show that one cannot do much better than Theorem \ref{tsquaredthm} without further knowledge of the location of the zeros of $\zeta(s)$.

\begin{theorem}\label{tctheorem}
    Let $\omega=\sup\{\Re(s):\zeta(s)=0\}$. If $1/2<\omega\leq 1$ and $c<1+\omega\leq 2$ then,
    \begin{align*}
        \int_2^x\frac{\pi(t)-\li(t)}{t^c}\mathrm{d}t=\Omega_+(1),\qquad\int_2^x\frac{\theta(t)-t}{t^c}\mathrm{dt}=\Omega_+(1),\\ \int_2^x\frac{\Pi(t)-\li(t)}{t^c}\mathrm{d}t=\Omega_+(1),\qquad\int_2^x\frac{\psi(t)-t}{t^c}\mathrm{dt}=\Omega_+(1).   
    \end{align*}
\end{theorem}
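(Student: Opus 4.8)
We prove the stronger statement that each of the four integrals is unbounded above (which a fortiori gives $\Omega_+(1)$), by a Landau-type oscillation argument. Let $E(t)$ denote any of $\psi(t)-t$, $\theta(t)-t$, $\Pi(t)-\li(t)$, $\pi(t)-\li(t)$, put $F(x)=\int_2^x E(t)t^{-c}\,\mathrm{d}t$, and suppose for contradiction that $F(x)\le M$ for all $x\ge 2$. Chebyshev's bounds give $E(t)=O(t)$, so, since $c<1+\omega\le 2$, we get $F(x)=O(x^{2-c})$; hence $G(x):=M-F(x)$ is non-negative and $O(x^{2-c})$. Integrating by parts, for $\Re\beta>2-c$,
\begin{equation*}
\int_2^\infty G(x)x^{-\beta-1}\,\mathrm{d}x \;=\; \frac{M\,2^{-\beta}}{\beta}-\frac{1}{\beta}\,H(c+\beta-1)\;=:\;\Phi(\beta),\qquad H(s):=\int_2^\infty E(t)\,t^{-s-1}\,\mathrm{d}t,
\end{equation*}
the integral defining $H(s)$ converging for $\Re s>1$.

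The crux is the analytic behaviour of $H$. Using $-\zeta'(s)/\zeta(s)=s\int_1^\infty\psi(t)t^{-s-1}\,\mathrm{d}t$ and $\log\zeta(s)=s\int_1^\infty\Pi(t)t^{-s-1}\,\mathrm{d}t$ for $\Re s>1$, the Mellin transform of $\li$ (expressible through the exponential integral $E_1$), and the decompositions $\psi(t)-\theta(t)=\sum_{k\ge 2}\theta(t^{1/k})$, $\Pi(t)-\pi(t)=\sum_{k\ge 2}\tfrac1k\pi(t^{1/k})$, one verifies in each case that: (i) $H$ continues analytically to $\{\Re s>\omega\}$; (ii) $H$ has a singularity on the line $\Re s=\omega$ — a pole or logarithmic branch point at a zero $\rho$ of $\zeta$ with $\Re\rho=\omega$ if that supremum is attained, otherwise singularities of $H$ accumulating to the line — so $H$ is analytic in no larger half-plane; and (iii) $H$ is analytic at every \emph{real} point $s\ge\omega$. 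Part (iii) holds for $\psi$ and $\Pi$ because $H$ turns out to be regular at $s=1$ and $\zeta$ has no zeros on the real interval $(1/2,1]$, and for $\theta$ and $\pi$ because the Mellin transforms of $\psi-\theta$ and $\Pi-\pi$ are analytic for $\Re s>1/2$ with rightmost real singularity $s=1/2$, which is strictly to the left of $\omega$ as $\omega>1/2$.

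Consequently $\Phi(\beta)=M2^{-\beta}/\beta-\beta^{-1}H(c+\beta-1)$ is analytic on $\{\Re\beta>\omega+1-c\}$ and at every real $\beta\ge\omega+1-c$ — the pole of $\beta^{-1}$ at $\beta=0$ doing no harm since $\omega+1-c>0$ — but in no larger half-plane. Since $G\ge 0$, Landau's theorem says the abscissa of convergence $\beta^*$ of $\int_2^\infty G(x)x^{-\beta-1}\,\mathrm{d}x$ is a real singular point of its analytic continuation, namely $\Phi$. Now $G(x)=O(x^{2-c})$ forces $\beta^*\le 2-c<\infty$, and $\beta^*<\omega+1-c$ would make $\Phi$ (agreeing with the convergent integral) analytic in a half-plane strictly larger than $\{\Re\beta>\omega+1-c\}$; hence $\beta^*\ge\omega+1-c$. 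But then $\beta^*$ is a real point $\ge\omega+1-c$ at which $\Phi$ is at once singular and analytic, a contradiction. So $F$ is unbounded above, and the theorem follows for all four functions.

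The main obstacle is (ii)–(iii): locating the singularities of the four Mellin transforms and, above all, checking that none lies on the real axis with real part $\ge\omega$. It is exactly this absence of real singularities that upgrades the routine two-sided $\Omega_\pm$ conclusion to the one-sided $\Omega_+$ asserted, and it is where both hypotheses enter — $\omega>1/2$ to push the real singularity coming from the $\tfrac12\pi(\sqrt t)$ and $\theta(\sqrt t)$ terms safely to the left of the relevant abscissa, and $c<1+\omega$ to guarantee both $\omega+1-c>0$ (so the pole at $\beta=0$ is harmless) and $\tfrac32-c<\omega+1-c$ (so the image of $s=\tfrac12$ stays strictly left of the line $\Re\beta=\omega+1-c$).
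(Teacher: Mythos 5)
Your argument is correct in outline, but it packages the proof differently from the paper. The paper first proves a quantitative two-sided result (Lemma \ref{signchangelem}): the integrals for $\Pi$ and $\psi$ are $\Omega_{\pm}(x^{1+\omega-c-\delta})$, obtained from the Mellin identities \eqref{piintegral}--\eqref{psiintegral} and the standard Ingham/Landau oscillation argument; it then transfers to $\pi$ and $\theta$ by the explicit elementary inequalities of Lemma \ref{pithetalemma} ($\pi(t)>\Pi(t)-1.9t^{1/2}$, $\theta(t)>\psi(t)-1.5t^{1/2}$), the loss $O(x^{3/2-c})$ being absorbed by choosing $\delta<\omega-\tfrac12$. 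You instead run the Landau argument once for each of the four error terms directly, with constant comparison function $M$, and handle the passage from $\psi,\Pi$ to $\theta,\pi$ analytically, via the observation that the Mellin transforms of $\psi-\theta$ and $\Pi-\pi$ continue to $\Re s>\tfrac12$, which lies strictly left of $\omega$. So the underlying engine is the same (Landau's theorem applied to Mellin transforms, exactly the ``standard argument'' the paper cites from Ingham pp.~90--91), and the hypothesis $\omega>\tfrac12$ enters at the same structural point in both treatments; what differs is that the paper's route yields the stronger quantitative conclusion $\Omega_{+}(x^{1+\omega-c-\delta})$ along a sequence, while yours yields only unboundedness above (which suffices for $\Omega_+(1)$) but dispenses with the explicit Rosser--Schoenfeld-type bounds of Lemma \ref{pithetalemma}. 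Be aware that the real content of your proof sits inside the ``one verifies'' of (i)--(iii): the cancellation at $s=1$ between $\log\zeta(s)$ (resp.\ $-\zeta'/\zeta$) and the Mellin transform of $\li(t)$ (resp.\ of $t$), the single-valuedness of $\log\zeta$ in the relevant simply connected regions, and the non-continuability of $H$ past zeros of $\zeta$ -- this is precisely the standard verification the paper delegates to Ingham, so it should be written out or cited rather than asserted. One small slip in your closing remarks: the inequality $\tfrac32-c<\omega+1-c$ is equivalent to $\omega>\tfrac12$ and has nothing to do with $c<1+\omega$; the latter is needed only for $\omega+1-c>0$.
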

Here, as per usual, the notation $g(x)=\Omega_+(1)$ means that there exist arbitrarily large values of $x$ such that $g(x)>0$.

\begin{remark}
    Despite the restrictions in Theorem \ref{tctheorem}, it is conceivable that one may be able to use a slightly (asymptotically) larger weight than $f(t)=1/t^2$ in Theorem \ref{tsquaredthm}. For instance, $f(t)=\log t/t^2$. Such a result would most likely require the use of an explicit zero-free region, e.g.\ \cite[Theorem 1]{M_T_2015} or \cite[Theorem 5]{Ford_2002}. We do not pursue this here.
\end{remark}

\section{Useful lemmas}
In this section, we list a series of useful lemmas. Most of the following results are explicit bounds on prime counting functions which follow directly from existing results in the literature.

\begin{lemma}[{\cite[Theorem 2]{buthe2018analytic}}]\label{buthelem}
    For all $2\leq x\leq 10^{19}$ we have $\pi(x)-\li(x)<0$ and $\theta(x)-x<0$.
\end{lemma}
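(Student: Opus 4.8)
The plan is simply to quote the computational and analytic work of Büthe: the bounds $\pi(x)<\li(x)$ and $\theta(x)<x$ on the range $2\le x\le 10^{19}$ are precisely \cite[Theorem 2]{buthe2018analytic}, so there is nothing genuinely new to establish. For orientation, though, I would recall the shape of that argument, since it explains why the particular bound $10^{19}$ (rather than something far larger) appears. One cannot verify $\pi(x)<\li(x)$ by tabulating $\pi$ at every integer up to $10^{19}$; instead the range is split. Below the reach of the available prime-counting data one checks the inequality using computed values of $\pi(x)$ at selected points together with the monotonicity of $\pi$ and the smooth, explicitly controlled growth of $\li$ between those points. On the remaining range one invokes the numerical verification of the Riemann hypothesis up to a large height $T$ and feeds the resulting information about the zeros into an explicit, truncated form of the explicit formula, bounding $\psi(x)-x$ — and hence $\theta(x)-x$ and $\pi(x)-\li(x)$ — above by a negative quantity.

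The second assertion, $\theta(x)-x<0$ on $2\le x\le 10^{19}$, is part of the same cited theorem, so I would simply record it alongside the $\pi$ bound. (If one wished to derive it from a bound on $\psi(x)-x$ instead, one would subtract the prime-power contribution $\psi(x)-\theta(x)=\sum_{p^m\le x,\,m\ge 2}\log p=O(\sqrt{x}\log x)$, which is comfortably smaller than the margin in a $\psi$-estimate; note that such a refinement is genuinely needed, since $\theta(x)\le\psi(x)$ points the wrong way and $\psi(x)-x$ is in fact positive for many $x$.)

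I do not expect any obstacle, because the entire difficulty is absorbed into the cited theorem. Within Büthe's proof the delicate point is the intermediate range, where the error term coming from zeros above height $T$ must be forced below the true gap $\li(x)-\pi(x)\asymp\sqrt{x}/\log x$; this is exactly what necessitates the high-precision verification of the Riemann hypothesis and the careful explicit-formula bookkeeping, and it is also what caps the accessible range at $10^{19}$. None of this needs to be reproved here, so the proof of the lemma is a one-line appeal to \cite[Theorem 2]{buthe2018analytic}.
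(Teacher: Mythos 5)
Your proposal matches the paper exactly: the lemma is stated with \cite[Theorem 2]{buthe2018analytic} as its justification and no further proof is given, so a one-line appeal to that theorem is precisely what the paper does. The additional sketch of Büthe's method is harmless background but not needed.
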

\begin{lemma}[{\cite[(3.5),(3.6),(3.15),(3.16)]{rosser1962approximate}}]\label{pithetaapprox}
    We have
    \begin{align}
        \frac{x}{\log x}&<\pi(x)<\frac{1.3x}{\log x},&x\geq 17,\label{piapprox}\\
        x-\frac{x}{\log x}&<\theta(x)<x+\frac{0.5x}{\log x},&x\geq 41 \label{thetaapprox}.
    \end{align}
\end{lemma}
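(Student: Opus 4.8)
The plan is to quote these inequalities directly from the classical paper of Rosser and Schoenfeld \cite{rosser1962approximate}; no new argument is needed. The lower bound $x/\log x<\pi(x)$ for $x\geq 17$ is their equation (3.5), while the matching upper bound is their (3.6), which in fact gives the sharper estimate $\pi(x)<1.25506\,x/\log x$ valid for every $x>1$. Since $1.25506<1.3$, the upper half of \eqref{piapprox} is an immediate rounding of (3.6) on the subrange $x\geq 17$; the cruder constant $1.3$ is chosen only because it is all that is required later in the paper and it keeps the subsequent bookkeeping clean. In exactly the same way, the two inequalities bounding $\theta(x)$ in \eqref{thetaapprox} are the Rosser--Schoenfeld formulas (3.15) and (3.16) (possibly after a harmless rounding of a constant), both valid once $x\geq 41$.

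Thus the only work is verification: I would check that each inequality asserted in the lemma is literally implied by its stated source, i.e.\ that the ranges of validity here are no larger than those in \cite{rosser1962approximate} (they agree) and that the constants $1.3$ and $0.5$ are no smaller than the constants appearing there (they are). A genuinely self-contained route is possible but pointless: one can recover the $\pi$-bounds from \eqref{thetaapprox} by partial summation, using $\pi(x)=\theta(x)/\log x+\int_2^x\theta(t)/(t\log^2 t)\,\mathrm{d}t$ together with a direct numerical check on a short initial interval, but this still rests on the $\theta$-estimates of \cite{rosser1962approximate} and so offers no real simplification over citing them.

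Accordingly there is no genuine obstacle here: the statement is a verbatim restatement (up to rounding of constants) of standard explicit bounds, and the "proof" amounts to the citation above. The one point worth care is consistency of ranges, for instance not invoking the sharper constant $1.25506$, or the lower bound in \eqref{thetaapprox}, outside the intervals on which \cite{rosser1962approximate} establishes them.
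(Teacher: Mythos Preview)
Your proposal is correct and matches the paper's approach exactly: the lemma is stated with a citation to the specific equations in Rosser--Schoenfeld and no proof is given, since the inequalities are taken verbatim (up to the harmless rounding of $1.25506$ to $1.3$ and restriction of the $\theta$ upper bound to $x\geq 41$) from that source.
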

\begin{lemma}\label{pithetalemma}
    We have
    \begin{align}
        \Pi(x)-1.9x^{1/2}&<\pi(x)<\Pi(x)-x^{1/2}/\log x,&x\geq 17,\label{piapprox2}\\
        \psi(x)-1.5x^{1/2}&<\theta(x)<\psi(x)-0.98x^{1/2},&x\geq 121\label{thetaapprox2}.
    \end{align}
\end{lemma}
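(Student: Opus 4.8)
The plan is to reduce both double inequalities to Riemann's elementary identities
\[
    \Pi(x)=\sum_{m\geq 1}\frac{1}{m}\,\pi\bigl(x^{1/m}\bigr),\qquad \psi(x)=\sum_{m\geq 1}\theta\bigl(x^{1/m}\bigr),
\]
in which the sums are finite since $x^{1/m}<2$, and hence $\pi(x^{1/m})=\theta(x^{1/m})=0$, once $m>\log_2 x$. Isolating the $m=1$ term gives
\[
    \Pi(x)-\pi(x)=\tfrac12\pi\bigl(x^{1/2}\bigr)+\sum_{m\geq 3}\frac{1}{m}\pi\bigl(x^{1/m}\bigr),\qquad \psi(x)-\theta(x)=\theta\bigl(x^{1/2}\bigr)+\sum_{m\geq 3}\theta\bigl(x^{1/m}\bigr),
\]
with both tail sums nonnegative; so in each inequality it is the $m=2$ term that must be pinned down accurately, while the tail only needs a crude upper bound.

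For \eqref{piapprox2} this works cleanly on its own. The lower bound follows from $\Pi(x)-\pi(x)\geq\tfrac12\pi(x^{1/2})$ and the left half of \eqref{piapprox} applied at $x^{1/2}$ (valid once $x^{1/2}\geq 17$, i.e.\ $x\geq 289$), giving $\tfrac12\pi(x^{1/2})>\tfrac12\cdot x^{1/2}/\log x^{1/2}=x^{1/2}/\log x$. For the upper bound, still for $x\geq 289$, the right half of \eqref{piapprox} bounds $\tfrac12\pi(x^{1/2})$ by $1.3\,x^{1/2}/\log x$, while the trivial estimate $\pi(y)<y$ bounds the tail by $\sum_{3\leq m\leq\log_2 x}\tfrac1m x^{1/m}=O(x^{1/3}\log x)=o(x^{1/2})$; since $1.3/\log x<0.23$ here, the sum stays well below $1.9\,x^{1/2}$. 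On the remaining range $17\leq x<289$ one verifies \eqref{piapprox2} directly: $\Pi(x)-\pi(x)$ is constant between consecutive prime powers whereas $x^{1/2}/\log x$ and $1.9\,x^{1/2}$ are increasing, so only finitely many $x$ need to be checked.

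For \eqref{thetaapprox2} the same reduction applies, but the accurate estimate of the $m=2$ term $\theta(x^{1/2})$ has to be imported rather than obtained from \eqref{thetaapprox}. The upper bound is immediate from the Rosser--Schoenfeld inequality $\psi(x)-\theta(x)<1.4262\,x^{1/2}$. The delicate point is the lower bound $\psi(x)-\theta(x)>0.98\,x^{1/2}$: since $\psi(x)-\theta(x)\geq\theta(x^{1/2})$, one would like $\theta(y)>0.98\,y$ at $y=x^{1/2}$, but the bound $\theta(y)>y-y/\log y$ from \eqref{thetaapprox} forces this only once $\log y>50$, a range one cannot hope to supplement by direct computation. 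Thus one must appeal to the sharper Rosser--Schoenfeld lower bound --- indeed the lower half of \eqref{thetaapprox2} is essentially the Rosser--Schoenfeld estimate $\psi(x)-\theta(x)>0.98\,x^{1/2}$ for $x\geq 121$, which is exactly why the hypothesis $x\geq 121$ appears; the extremal case is $x$ lying just below the square of a prime, and the inequality already fails at $x=120$.

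Consequently the main obstacle is this last lower bound for $\psi(x)-\theta(x)$. Everything else reduces to routine bookkeeping with Riemann's identities together with a finite verification for the small $x$ not covered by the asymptotic estimates, but the constant $0.98$ on the full range $x\geq 121$ genuinely requires the finer explicit bounds for $\theta(x)$ (equivalently for $\psi(x)-\theta(x)$) from the literature, the elementary inequalities of \eqref{thetaapprox} being too lossy for it.
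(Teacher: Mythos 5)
Your proposal is correct and takes essentially the same route as the paper: Riemann's identity $\Pi(x)=\sum_{m}\frac{1}{m}\pi(x^{1/m})$ together with Lemma \ref{pithetaapprox} applied to the $m=2$ term (plus a crude tail bound) for \eqref{piapprox2}, and the imported Rosser--Schoenfeld estimates $0.98x^{1/2}<\psi(x)-\theta(x)<1.4262x^{1/2}$ for \eqref{thetaapprox2}. You are in fact slightly more careful than the paper, which applies \eqref{piapprox} at $x^{1/2}$ without noting that this requires $x\geq 289$ and a finite verification for $17\leq x<289$.
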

\begin{proof}
    Let $M=\lfloor\frac{\log x}{\log 2}\rfloor$. Then
    \begin{equation*}
        \Pi(x)=\sum_{m=1}^M\frac{1}{m}\pi(x^{1/m})\leq\pi(x)+\frac{M}{2}\pi(x^{1/2})<\pi(x)+\frac{1.3}{\log 2}x^{1/2}<\pi(x)+1.9x^{1/2}
    \end{equation*}
    by Lemma \ref{pithetaapprox}. On the other hand,
    \begin{equation*}
        \Pi(x)=\sum_{m=1}^M\frac{1}{m}\pi(x^{1/m})>\pi(x)+\frac{1}{2}\pi(x^{1/2})>\pi(x)+\frac{x^{1/2}}{\log x}.
    \end{equation*}
    as required. The inequalities in \eqref{thetaapprox2} then follow from equations (3.36) and (3.37) in \cite{rosser1962approximate}.
\end{proof}
\begin{lemma}\label{lilemma}
    For all $x>1$, we have
    \begin{equation*}
        \li(x)<\frac{x}{\log x}+\frac{2x}{\log^2x}.
    \end{equation*}
\end{lemma}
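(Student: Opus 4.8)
The plan is to prove the claimed inequality by a one-variable monotonicity argument, reducing it to the verification of a single value. Put
\[
 g(x)=\frac{x}{\log x}+\frac{2x}{\log^2 x}-\li(x),
\]
so that what must be shown is $g(x)>0$ for all $x>1$. Since $\li$ is smooth on $(1,\infty)$ with $\li'(x)=1/\log x$, one differentiates term by term; using
\[
 \frac{\mathrm{d}}{\mathrm{d}x}\frac{x}{\log x}=\frac{1}{\log x}-\frac{1}{\log^2 x},\qquad
 \frac{\mathrm{d}}{\mathrm{d}x}\frac{x}{\log^2 x}=\frac{1}{\log^2 x}-\frac{2}{\log^3 x},
\]
the terms $1/\log x$ cancel and everything collapses to
\[
 g'(x)=\frac{1}{\log^2 x}-\frac{4}{\log^3 x}=\frac{\log x-4}{\log^3 x}.
\]

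From this factored form, $g'<0$ on $(1,e^4)$ and $g'>0$ on $(e^4,\infty)$, so $g$ is strictly decreasing on $(1,e^4]$ and strictly increasing on $[e^4,\infty)$; hence $g$ attains its global minimum over $(1,\infty)$ at $x=e^4$, and it suffices to check
\[
 g(e^4)=\frac{3e^4}{8}-\li(e^4)>0,
\]
that is, $\li(e^4)<3e^4/8=20.47\ldots$. This is a finite numerical check. Writing $\li(e^4)=\li(2)+\int_2^{e^4}\mathrm{d}t/\log t$ removes the principal value, and since $1/\log t$ is convex on $(1,\infty)$ the trapezoidal rule on a suitable partition of $[2,e^4]$ --- refined near $t=2$, where $1/\log t$ varies most --- produces an explicit overestimate of the tail; together with $\li(2)<1.05$ this gives $\li(e^4)<3e^4/8$. (Alternatively, substituting $t=e^u$ and applying the trapezoidal rule to the convex function $e^u/u$ on $[\log 2,4]$ yields the slightly more generous bound $\li(e^4)<20$.)

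The only step that needs any care is this last numerical verification, since the margin at the minimum is modest: one has $\li(e^4)\approx 19.6$ against $3e^4/8\approx 20.5$, so the trapezoidal partition should have a handful of nodes rather than just one or two. Everything else is immediate --- the whole argument rests on $g'$ having a single sign change on $(1,\infty)$, which is transparent from the form $g'(x)=(\log x-4)/\log^3 x$, and the one computation worth doing carefully is the cancellation that produces this clean expression. For orientation, as $x\to1^+$ one has $g(x)\to+\infty$ (since $\li(x)\to-\infty$ while $x/\log x\to+\infty$), consistent with $g$ falling from $+\infty$ to its minimum at $e^4$ and rising back up.
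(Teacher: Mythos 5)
Your proof is correct, but it takes a genuinely different route from the paper's. The paper disposes of the lemma by citing an existing explicit estimate (Lemma 5.9 of Bennett et al.) for $x\geq 1865$ and checking the remaining range $1<x<1865$ computationally, whereas you give a self-contained calculus argument: your derivative computation is right ($\li'(x)=1/\log x$ on $(1,\infty)$ holds for the principal-value definition, and the terms collapse to $g'(x)=(\log x-4)/\log^3 x$), so $g$ decreases on $(1,e^4]$, increases on $[e^4,\infty)$, and tends to $+\infty$ as $x\to 1^+$, reducing the whole lemma to the single inequality $\li(e^4)<3e^4/8$. That inequality is true with margin about $0.84$ (one has $\li(e^4)=19.63\ldots$ against $3e^4/8=20.47\ldots$), and your proposed verification is sound: $1/\log t$ (equivalently $e^u/u$ after $t=e^u$) is convex, so every trapezoidal sum overestimates $\int_2^{e^4}\mathrm{d}t/\log t$, and a partition with roughly eight to ten subintervals already pushes the overestimate below the available margin, together with $\li(2)<1.05$. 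What your approach buys is independence from the literature and a uniform argument on all of $(1,\infty)$ hinging on one numerical value; what the paper's approach buys is brevity, outsourcing the large-$x$ range to a known explicit bound at the cost of a (routine but unstated) finite computation for $1<x<1865$. Either way a numerical step is unavoidable, and yours is arguably the cleaner one since it is a single evaluation at the critical point $x=e^4$.
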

\begin{proof}
    For $x\geq 1865$ the result follows from \cite[Lemma 5.9]{bennett2018explicit}. For smaller values of $x$, the result follows via simple computations.
\end{proof}
\begin{lemma}\label{exactints}
    We have,
    \begin{align*}
        -30000&<\int_2^{3000}\left(\pi(t)-\li(t)\right)\mathrm{d}t<-29000,\\
        -140000&<\int_2^{3000}\left(\theta(t)-t\right)\mathrm{d}t<-130000,\\
        -2900&<\int_2^{3000}\left(\psi(t)-t\right)\mathrm{d}t<-2800.
    \end{align*}
\end{lemma}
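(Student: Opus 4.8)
The plan is to reduce each of the three integrals to a short finite computation. Since $\pi$, $\theta$ and $\psi$ are step functions on $[2,3000]$, I would first interchange summation and integration (equivalently, integrate piecewise) to obtain
\begin{align*}
    \int_2^{3000}\pi(t)\,\mathrm{d}t &= \sum_{p\le 3000}(3000-p),\\
    \int_2^{3000}\theta(t)\,\mathrm{d}t &= \sum_{p\le 3000}(3000-p)\log p,\\
    \int_2^{3000}\psi(t)\,\mathrm{d}t &= \sum_{p^m\le 3000}(3000-p^m)\log p,
\end{align*}
each of which is a sum of fewer than $500$ terms, since there are $\pi(3000)=430$ primes below $3000$ together with a handful of higher prime powers; these sums I would simply evaluate.

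For the $\li$ contribution I would use that $\frac{\mathrm{d}}{\mathrm{d}t}\li(t^2)=\frac{2t}{\log(t^2)}=\frac{t}{\log t}$, so that $\int_2^X\frac{t}{\log t}\,\mathrm{d}t=\li(X^2)-\li(4)$, and then integrate $\li$ by parts (taking $v=t$ and $\mathrm{d}u=\mathrm{d}t/\log t$) to get the closed form
\begin{equation*}
    \int_2^X\li(t)\,\mathrm{d}t = X\li(X)-2\li(2)-\li(X^2)+\li(4).
\end{equation*}
With $X=3000$ this expresses $\int_2^{3000}\li(t)\,\mathrm{d}t$ in terms of the values of $\li$ at $2$, $4$, $3000$ and $9\,000\,000$, which I would compute with rigorous error control (for instance via interval arithmetic, or from the asymptotic expansion of $\li$ with an explicit remainder, or — for a cruder certification — from Lemma \ref{lilemma} together with a matching lower bound on $\li$). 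The remaining integral is trivial, $\int_2^{3000}t\,\mathrm{d}t=\frac{1}{2}(3000^2-4)$, and assembling the pieces yields the three quantities in the statement.

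Since the claimed enclosures have widths $1000$, $10\,000$ and $100$ respectively, only a handful of significant digits of accuracy are needed, so the numerics are not delicate and there is no genuine conceptual obstacle here. The only points demanding a little care are getting the antiderivative of $\li$ right and certifying the value of $\li(9\,000\,000)$, whose error propagates into $\int_2^{3000}\li(t)\,\mathrm{d}t$ with coefficient $1$; a modest rigorous estimate for $\li$ at that point is more than enough.
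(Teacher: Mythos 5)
Your proposal is correct, and it is in fact more detailed than the paper's own justification, which simply states that the three integrals were computed directly in Mathematica. Your reduction is exactly how one would certify such a computation rigorously: since $\pi$, $\theta$, $\psi$ are step functions, the identities $\int_2^{3000}\pi(t)\,\mathrm{d}t=\sum_{p\le 3000}(3000-p)$, and the analogues weighted by $\log p$ over primes (respectively prime powers, of which there are $466\le 500$ up to $3000$, as you say), are exact finite sums. Your closed form for the logarithmic-integral piece is also right: differentiating $X\li(X)-2\li(2)-\li(X^2)+\li(4)$ in $X$ gives $\li(X)+\frac{X}{\log X}-\frac{2X}{\log(X^2)}=\li(X)$, and the expression vanishes at $X=2$, so $\int_2^X\li(t)\,\mathrm{d}t=X\li(X)-2\li(2)-\li(X^2)+\li(4)$ as claimed; the remaining term $\int_2^{3000}t\,\mathrm{d}t=\tfrac12(3000^2-4)$ is exact. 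Your observation about error propagation is the right one: the only non-elementary quantities are $\li$ at four points, the value $\li(9\cdot10^6)$ enters with coefficient $1$ and $\li(3000)$ with coefficient $3000$, and since the claimed enclosures have widths $1000$, $10000$ and $100$, a few guaranteed digits of $\li$ (obtainable from interval arithmetic or an explicit-remainder expansion) suffice. So the two approaches differ only in rigor of presentation: the paper outsources the whole computation to software, while your route makes the computation finite, exact except for $\li$, and independently checkable, which is arguably preferable for a lemma whose numerical enclosures feed into the main theorems.
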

\begin{proof}
    Follows by directly computing each integral on Mathematica.
\end{proof}
\begin{lemma}\label{explicitlemma}
    Assuming the Riemann hypothesis, $\left|\int_2^x\left(\psi(t)-t\right)\mathrm{d}t\right|\leq 0.08x^{3/2}$ for all $x\geq 3000$.
\end{lemma}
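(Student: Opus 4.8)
The plan is to use the classical explicit formula for the second Chebyshev function, namely
\[
    \psi(t) - t = -\sum_{\rho}\frac{t^{\rho}}{\rho} - \log(2\pi) - \tfrac{1}{2}\log(1 - t^{-2}),
\]
where the sum runs over the nontrivial zeros $\rho$ of $\zeta(s)$. Integrating term by term from $2$ (or $0$) to $x$ turns $t^\rho/\rho$ into $t^{\rho+1}/(\rho(\rho+1))$, so that
\[
    \int_0^x(\psi(t)-t)\,\mathrm{d}t = -\sum_\rho\frac{x^{\rho+1}}{\rho(\rho+1)} + (\text{lower order terms}),
\]
and the lower-order terms (a linear term $-x\log(2\pi)$, plus the contribution of the trivial zeros and the $\log(1-t^{-2})$ piece, plus the constant from the lower limit) are all $O(x)$ and hence absorbed comfortably into $0.08x^{3/2}$ for $x\geq 3000$. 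Under the Riemann hypothesis every $\rho = \tfrac12 + i\gamma$, so $|x^{\rho+1}| = x^{3/2}$ and $|\rho(\rho+1)| = |\rho|\,|\rho+1| \geq (\tfrac14+\gamma^2)$ essentially; thus the whole zero-sum is bounded by $x^{3/2}\sum_\rho \frac{1}{|\rho(\rho+1)|}$.

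The key quantitative input is therefore the numerical value of the constant
\[
    S := \sum_\rho \frac{1}{|\rho|\,|\rho+1|}.
\]
I would bound this using the classical fact that $\sum_\rho 1/|\rho|^2 = \sum_\rho 1/(\tfrac14+\gamma^2) = 2 + C_0 - \log(4\pi) \approx 0.0461$ (equivalently, $\sum_{\gamma>0} 1/(\tfrac14+\gamma^2) \approx 0.023$), combined with $|\rho+1|\geq|\rho|$ which already gives $S \leq \sum_\rho 1/|\rho|^2 \approx 0.046$. Since $0.046 < 0.08$, and the $O(x)$ terms contribute at most something like $0.01x^{3/2}$ once $x\geq 3000$ (indeed $\log(2\pi)\,x / x^{3/2} = \log(2\pi)/\sqrt{x} < 0.034$ at $x=3000$, and one can do better), a careful accounting should close the bound. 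To be safe I would instead pair zeros $\rho$ and $\bar\rho$ so the sum is manifestly real, write the paired contribution as a single real expression, and use $|\rho(\rho+1)|^{-1} \le (\tfrac14+\gamma^2)^{-1}\cdot(1 + O(1/\gamma))$-type estimates; alternatively, invoke an existing explicit bound in the literature (e.g. a Brent–type numerical evaluation, or the constants tabulated in work on $\int_0^x(\psi(t)-t)\,\mathrm{d}t$) to get $S$ with a comfortable margin below $0.08$.

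The main obstacle is making the manipulation of the conditionally convergent zero-sum rigorous and controlling the constants sharply enough. The series $\sum_\rho x^\rho/\rho$ converges only conditionally (in the symmetric sense, summing $|\gamma|\le T$ and letting $T\to\infty$), so some care is needed when integrating term by term; however, after integration the series $\sum_\rho x^{\rho+1}/(\rho(\rho+1))$ is absolutely convergent since $\sum_\rho 1/|\rho|^2 < \infty$, which makes the interchange of summation and integration legitimate and removes the delicacy. The remaining work is purely a matter of bounding the $O(x)$ remainder terms explicitly for $x\geq 3000$ — using $x^{-1/2}\le 3000^{-1/2}$ to convert them into a small multiple of $x^{3/2}$ — and adding this to the bound $S\,x^{3/2}$; I expect the total to land safely under $0.08x^{3/2}$, with the numerical evaluation of $S$ (or citation thereof) being the one spot where a concrete constant must be pinned down.
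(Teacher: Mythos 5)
Your proposal is correct and follows essentially the same route as the paper: bound the zero sum by $x^{3/2}\sum_\rho 1/|\rho(\rho+1)| \approx 0.046\,x^{3/2}$ under RH and absorb the $O(x)$ terms (dominated by $x\log 2\pi$, about $0.034\,x^{3/2}$ at $x=3000$) using $x\ge 3000$. The only difference is cosmetic: the paper sidesteps the term-by-term integration issue you flag by citing Ingham's Theorem 27 for the already-integrated explicit formula for $\int\psi(t)\,\mathrm{d}t$, and takes the numerical constant $\sum_\rho 1/\gamma^2 = 0.0462\ldots$ from Brent et al.\ rather than from the closed form $2+\gamma-\log 4\pi$.
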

\begin{proof}
    By \cite[Theorem 27]{ingham1932distribution},
    \begin{equation*}
        \psi_1(x):=\int_{2}^x\psi(t)\:\mathrm{d}t=\frac{x^2}{2}-\sum_{\rho}\frac{x^{\rho+1}}{\rho(\rho+1)}-x\frac{\zeta'}{\zeta}(0)+\frac{\zeta'}{\zeta}(-1)-\sum_{r=1}^\infty\frac{x^{1-2r}}{2r(2r-1)},
    \end{equation*}
    where the sum is taken over all non-trivial zeros $\rho=\beta+i\gamma$ of the Riemann zeta-function. First we note that
    \begin{equation*}
        \left|\sum_{\rho}\frac{x^{\rho+1}}{\rho(\rho+1)}\right|\leq x^{1+\omega}\sum_{\rho}\frac{1}{\gamma(\gamma+1)}\leq 0.04621x^{3/2}
    \end{equation*}
    since $\sum_{\rho}1/\gamma^2=0.046209\ldots$ \cite[Corollary 1]{brent2021accurate}.
    Next, $(\zeta'/\zeta)(0)=\log 2\pi$ \cite[\S 3.8]{Edwards_74} and $(\zeta'/\zeta)(-1)=1.985\ldots$ as computed on Mathematica. Finally,
    \begin{equation*}
        0\leq\sum_{r=1}^\infty\frac{x^{1-2r}}{2r(2r-1)}\leq\frac{x}{2}\sum_{r=1}^\infty\frac{1}{x^{2r}}=\frac{x}{2(x^2-1)}.
    \end{equation*}
    Thus, noting that $\frac{x^2}{2}=\int_2^x t\:\mathrm{d}t+2$,
    \begin{align*}
        \left|\int_2^x\left(\psi(t)-t\right)\mathrm{d}t\right|&\leq x^{3/2}\left(0.04621+\frac{\log(2\pi)}{x^{1/2}}+\frac{2+1.986}{x^{3/2}}+\frac{1}{2x^{1/2}(x^2-1)}\right)\\
        &\leq 0.08x^{3/2}
    \end{align*}
    for all $x\geq 3000$.
\end{proof}

\begin{lemma}[cf. {\cite[pp. 103--104]{ingham1932distribution}}]\label{pililemma}
    Let $Q(x)=\Pi(x)-\li(x)$, $R(x)=\psi(x)-x$ and $R_1(x)=\int_2^xR(t)\:\mathrm{d}t$. Then,
    \begin{equation}
        Q(x)=\frac{R(x)}{\log x}+\frac{R_1(x)}{x\log^2x}+\int_{3000}^x\left(\frac{R_1(t)}{t^2\log^2t}+\frac{2R_1(t)}{t^2\log^3t}\right)\mathrm{d}t+C,
    \end{equation}
    where
    \begin{equation*}
        C=Q(3000)-\frac{R(3000)}{\log(3000)}-\frac{R_1(3000)}{3000\log^2(3000)}=-0.4351\ldots<0.
    \end{equation*}
\end{lemma}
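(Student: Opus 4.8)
The stated formula is an exact partial-summation identity, essentially the one on pp.~103--104 of \cite{ingham1932distribution}; the only piece of arithmetic involved is the sign of the constant $C$. I would carry out three algebraic steps and then one numerical evaluation. First, since $\Pi(x)=\sum_{n\le x}\Lambda(n)/\log n$ and $\psi(x)=\sum_{n\le x}\Lambda(n)$, one has $\mathrm{d}\Pi(t)=\mathrm{d}\psi(t)/\log t$ as Riemann--Stieltjes measures (and $3000$ is not a prime power, so no boundary jump intervenes). Thus $\Pi(x)=\Pi(3000)+\int_{3000}^x\mathrm{d}\psi(t)/\log t$, and integrating by parts with $\mathrm{d}(1/\log t)=-\mathrm{d}t/(t\log^2 t)$ and writing $\psi(t)=t+R(t)$ yields
\[
\Pi(x)=\Pi(3000)+\frac{x}{\log x}-\frac{3000}{\log 3000}+\int_{3000}^x\frac{\mathrm{d}t}{\log^2 t}+\frac{R(x)}{\log x}-\frac{R(3000)}{\log 3000}+\int_{3000}^x\frac{R(t)}{t\log^2 t}\,\mathrm{d}t.
\]

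Next, the same integration by parts applied to $\li(x)=\li(3000)+\int_{3000}^x\mathrm{d}t/\log t$ gives $\frac{x}{\log x}+\int_{3000}^x\mathrm{d}t/\log^2 t=\li(x)-\li(3000)+\frac{3000}{\log 3000}$; substituting this and subtracting $\li(x)$ causes the main terms (and the two copies of $3000/\log 3000$) to cancel, leaving
\[
Q(x)=Q(3000)+\frac{R(x)}{\log x}-\frac{R(3000)}{\log 3000}+\int_{3000}^x\frac{R(t)}{t\log^2 t}\,\mathrm{d}t.
\]

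Finally, I would integrate the remaining integral by parts once more against $\mathrm{d}R_1(t)=R(t)\,\mathrm{d}t$. Since
\[
\frac{\mathrm{d}}{\mathrm{d}t}\!\left(\frac{1}{t\log^2 t}\right)=-\frac{1}{t^2\log^2 t}-\frac{2}{t^2\log^3 t},
\]
this produces
\[
\int_{3000}^x\frac{R(t)}{t\log^2 t}\,\mathrm{d}t=\frac{R_1(x)}{x\log^2 x}-\frac{R_1(3000)}{3000\log^2 3000}+\int_{3000}^x\left(\frac{R_1(t)}{t^2\log^2 t}+\frac{2R_1(t)}{t^2\log^3 t}\right)\mathrm{d}t,
\]
and collecting the three constants at $3000$ into $C=Q(3000)-R(3000)/\log 3000-R_1(3000)/(3000\log^2 3000)$ gives the identity. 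To pin down $C$, note that $\Pi(3000)=\sum_{p^m\le 3000}1/m$ and $\psi(3000)=\sum_{p^m\le 3000}\log p$ are explicit finite sums, $\li(3000)$ is a routine quadrature, and $R_1(3000)=\int_2^{3000}(\psi(t)-t)\,\mathrm{d}t$ is a piecewise-polynomial integral (already bracketed in Lemma \ref{exactints}); evaluating all four on Mathematica gives $C=-0.4351\ldots<0$.

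I do not expect any step to be a genuine obstacle, since the whole argument is bookkeeping in Riemann--Stieltjes integration. The points that need a little care are: getting the measure identity $\mathrm{d}\Pi=\mathrm{d}\psi/\log t$ and its jump behaviour right; using $3000$ in place of $2$ as the base point, which is legitimate because $1/\log t$ is smooth there and is what makes $C$ appear in the shape used later in the paper; and carrying out the numerical evaluation of the four quantities entering $C$ to enough precision to certify its sign.
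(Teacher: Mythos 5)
Your proposal is correct and follows essentially the same route as the paper: partial summation (equivalently, the Stieltjes identity $\mathrm{d}\Pi=\mathrm{d}\psi/\log t$) to obtain $Q(x)=Q(3000)+\frac{R(x)}{\log x}-\frac{R(3000)}{\log 3000}+\int_{3000}^x\frac{R(t)}{t\log^2t}\,\mathrm{d}t$, followed by one further integration by parts against $\mathrm{d}R_1$, with the numerical value of $C$ checked by direct computation. The only differences are presentational (you spell out the Riemann--Stieltjes bookkeeping and the evaluation of $C$, which the paper leaves implicit).
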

\begin{proof}
    Using integration by parts 
    \begin{equation*}
        \li(x)-\li(3000)=\frac{x}{\log x}+\int_{3000}^{x}\frac{\mathrm{d}t}{\log^2t}-\frac{3000}{\log(3000)}
    \end{equation*}
    so that by partial summation
    \begin{equation*}
        \Pi(x)-\Pi(3000)=\li(x)-\li(3000)+\frac{R(x)}{\log x}-\frac{R(3000)}{\log (3000)}+\int_{3000}^x\frac{R(t)}{t\log^2t}\mathrm{d}t.
    \end{equation*}
    Hence,
    \begin{equation*}
        Q(x)=\frac{R(x)}{\log x}+\int_{3000}^x\frac{R(t)}{t\log^2t}\mathrm{d}t+Q(3000)-\frac{R(3000)}{\log(3000)}.
    \end{equation*}
    A further application of integration by parts then gives the desired result.
\end{proof}
\begin{lemma}\label{signchangelem}
    Let $c\in\mathbb{R}$ and define 
    \begin{equation*}
        \Pi_{1,c}(x):=\int_2^x\frac{\Pi(t)}{t^c}\mathrm{d}t\quad\text{and}\quad\psi_{1,c}(x):=\int_2^x\frac{\psi(t)}{t^c}\mathrm{d}t.
    \end{equation*}
    If $\omega=\sup\{\Re(s):\zeta(s)=0\}$ and $c<1+\omega\leq 2$, then for all $\delta>0$,
    \begin{equation*}
        \Pi_{1,c}(x)-\int_2^x\frac{\li(t)}{t^c}=\Omega_{\pm}(x^{1+\omega-c-\delta})\quad\text{and}\quad\psi_{1,c}(x)-\int_2^x\frac{t}{t^c}\mathrm{d}t=\Omega_{\pm}(x^{1+\omega-c-\delta}).
    \end{equation*}
\end{lemma}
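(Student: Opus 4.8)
The plan is to deduce the lemma from Landau's classical theorem on the singularities of Dirichlet integrals (cf.\ \cite{ingham1932distribution}), following the template of the standard proof that $\psi(x)-x=\Omega_{\pm}(x^{\omega-\varepsilon})$ for every $\varepsilon>0$. I will argue in detail for $\psi_{1,c}$; the case of $\Pi_{1,c}$ is entirely parallel, with $\log\zeta(w)$ replacing $-\tfrac{\zeta'}{\zeta}(w)$ and the zeros of $\zeta$ contributing logarithmic branch points rather than poles — equally singularities for the purposes of Landau's theorem. Put $f(x)=\psi_{1,c}(x)-\int_2^x t^{1-c}\,\mathrm{d}t=\int_2^x(\psi(t)-t)t^{-c}\,\mathrm{d}t$. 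Since $\psi(t)=O(t)$ and $c<1+\omega\le 2$ we have $f(x)=O(x^{2-c})$, so the Mellin transform $F(s)=\int_1^\infty f(x)x^{-s-1}\,\mathrm{d}x$ converges absolutely for $\Re s>2-c$. Integrating by parts and invoking the classical formula $\int_1^\infty(\psi(x)-x)x^{-w-1}\,\mathrm{d}x=-\tfrac{1}{w}\tfrac{\zeta'}{\zeta}(w)-\tfrac{1}{w-1}$ $(\Re w>1)$ with $w=c+s-1$, one checks that $F$ continues meromorphically to $\Re s>0$, where, once the apparent pole at $s=2-c$ cancels, its only singularities are: a pole at $s=0$; a possible real pole at $s=1-c$; the real points $s=1-2k-c$ $(k\ge 1)$ coming from the trivial zeros; and — decisively — poles at $s=1+\rho-c$ over the non-trivial zeros $\rho=\beta+i\gamma$ of $\zeta$. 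This last family has real parts $1+\beta-c\le 1+\omega-c$ with supremum exactly $1+\omega-c$, and its members are \emph{non-real} since $\gamma\neq 0$.

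Now fix $\delta>0$ and put $\alpha=1+\omega-c-\delta$. To prove $f(x)=\Omega_+(x^{\alpha})$, suppose not; then $f(x)<x^{\alpha}$ for all $x\ge x_1$, so $g(x):=x^{\alpha}-f(x)\ge 0$ for $x\ge x_1$. For $\Re s>2-c$ one has $\int_1^\infty g(x)x^{-s-1}\,\mathrm{d}x=\tfrac{1}{s-\alpha}-F(s)$, and the right-hand side continues meromorphically to $\Re s>0$ with the same non-real poles $1+\rho-c$ as $F$, together with real singularities only at $s=\alpha,\,1-c,\,0$ and $s=1-2k-c$. Since $g$ is eventually nonnegative and $g(x)=O(x^{2-c})$, this integral has a finite abscissa of convergence $\sigma_c\le 2-c$, and is analytic for $\Re s>\sigma_c$; since it agrees there with its meromorphic continuation, the latter has no pole in $\Re s>\sigma_c$, so (as it does have poles $1+\rho-c$ with $\beta$ arbitrarily close to $\omega$) we get $\sigma_c\ge 1+\omega-c$. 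By Landau's theorem — in the form valid for integrands that are eventually of one sign — $s=\sigma_c$ must itself be a singularity of the transform. But every real singularity of $\tfrac{1}{s-\alpha}-F(s)$ lies strictly to the left of $1+\omega-c\le\sigma_c$: the pole $s=\alpha$ because $\delta>0$, and $s=1-c$, $s=0$ and the $s=1-2k-c$ because $\omega\ge 1/2>0$. Hence the continuation, and so the transform, is analytic in a neighbourhood of the real point $\sigma_c$ — a contradiction. This yields $f(x)=\Omega_+(x^{\alpha})$, and the symmetric argument applied to $x^{\alpha}+f(x)$ gives $f(x)=\Omega_-(x^{\alpha})$; hence $f(x)=\Omega_{\pm}(x^{1+\omega-c-\delta})$ for every $\delta>0$.

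The substance of the argument is in the first paragraph: carrying out the analytic continuation of $F$, verifying that the spurious pole at $s=2-c$ genuinely cancels, and — the single place where the hypothesis $c<1+\omega$ is used — checking that no real singularity of $F$ enters the interval $[1+\omega-c,\,2-c]$ in which $\sigma_c$ is trapped, so that the singularities obstructing continuation to the left are all non-real and Landau's theorem bites. One must also state Landau's theorem in its ``eventually of one sign'' form (the part of $g$ on $[1,x_1]$ contributes an entire function, hence is harmless) and apply it to $g$ rather than to $f$. For $\Pi_{1,c}$ one uses $\int_1^\infty\Pi(x)x^{-w-1}\,\mathrm{d}x=\tfrac{1}{w}\log\zeta(w)$ together with the analogous Mellin transform of $\li$ in place of the formula above; the logarithmic singularities of the two at $w=1$ cancel, each non-trivial zero now furnishes a logarithmic branch point at $s=1+\rho-c$ instead of a pole, and nothing else in the argument changes.
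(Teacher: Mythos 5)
Your proposal is correct and follows essentially the same route as the paper: the paper derives the Mellin-type identities $\log\zeta(s)=s(s+1-c)\int_1^\infty\Pi_{1,c}(x)x^{-(s+2-c)}\mathrm{d}x$ and $-\tfrac{\zeta'}{\zeta}(s)=s(s+1-c)\int_1^\infty\psi_{1,c}(x)x^{-(s+2-c)}\mathrm{d}x$ by the same integration-by-parts step you perform, and then invokes the standard Landau-type oscillation argument (Ingham, pp.~90--91), which is exactly the argument you carry out explicitly, including the correct handling of the eventually one-signed integrand and of the logarithmic branch points in the $\Pi$ case. So you have simply filled in the details that the paper delegates to the cited ``standard argument.''
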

\begin{proof}
    We begin with the integral expression \cite[(18), p. 18]{ingham1932distribution}
    \begin{equation*}
        \log\zeta(s)=s\int_1^\infty\frac{\Pi(x)}{x^{s+1}}\mathrm{d}x,\quad\Re(s)>1.
    \end{equation*}
    Using integration by parts 
    \begin{align*}
        \int_1^\infty\frac{\Pi(x)}{x^{s+1}}\mathrm{d}x&=\int_1^\infty\frac{d\Pi_{1,c}(x)}{x^{s+1-c}}\\
        &=\left[\frac{\Pi_{1,c}(x)}{x^{s+1-c}}\right]_1^\infty+(s+1-c)\int_1^\infty\frac{\Pi_{1,c}(x)}{x^{s+2-c}}\mathrm{d}x\\
        &=(s+1-c)\int_1^\infty\frac{\Pi_{1,c}(x)}{x^{s+2-c}}\mathrm{d}x,
    \end{align*}
    noting that since $\Pi(x)=O(x)$,
    \begin{equation*}
        \left|\frac{\Pi_{1,c}(x)}{x^{s+1-c}}\right|=O\left(x^{c-1-\Re(s)}\int_2^xt^{1-c}\mathrm{d}t\right)=O(x^{1-\Re(s)})=o(1).
    \end{equation*}
    Hence,
    \begin{equation}\label{piintegral}
        \log\zeta(s)=s(s+1-c)\int_1^\infty\frac{\Pi_{1,c}(x)}{x^{s+2-c}}\mathrm{d}x,\quad\Re(s)>1.
    \end{equation}
    Using an analogous argument with the integral expression \cite[(17), p. 18]{ingham1932distribution}
    \begin{equation*}
        -\frac{\zeta'(s)}{\zeta(s)}=s\int_1^\infty\frac{\psi(x)}{x^{s+1}}\mathrm{d}x,\quad\Re(s)>1
    \end{equation*}
    one obtains
    \begin{equation}\label{psiintegral}
        -\frac{\zeta'(s)}{\zeta(s)}=s(s+1-c)\int_1^\infty\frac{\psi_{1,c}(x)}{x^{s+2-c}}\mathrm{d}x,\quad\Re(s)>1.
    \end{equation}
    Equipped with \eqref{piintegral} and \eqref{psiintegral} one can then follow a standard argument (e.g.\ \cite[pp. 90--91]{ingham1932distribution}, \cite[p. 80]{Broughan_17}) \textit{mutatis mutandis} to obtain the desired result.
\end{proof}

\section{Proof of Theorems \ref{rhthm1} and \ref{rhthm2}}
We begin with the case where the Riemann hypothesis is false. By Lemma \ref{signchangelem} with $c=0$, there are arbitrarily large values of $x$ such that $\int_{2}^x\Pi(t)-\li(t)\:\mathrm{d}t>Kx^{\kappa}$ for some positive constants $K>0$ and $\kappa>3/2$. For such values of $x$, we then have by Lemma \ref{pithetalemma} that
\begin{align*}
    \int_2^x\left(\pi(t)-\li(t)\right)\mathrm{d}t&=\int_{2}^{17}\left(\pi(t)-\li(t)\right)\mathrm{d}t+\int_{17}^{x}\left(\pi(t)-\li(t)\right)\mathrm{d}t\\
    &>-\int_{17}^x1.9t^{1/2}\:\mathrm{d}t+\int_{17}^x\left(\Pi(t)-\li(t)\right)\mathrm{d}t+O(1)\\
    &=Kx^\kappa+O(x^{3/2}).
\end{align*}
Thus, there are arbitrarily large values of $x$ such that
\begin{equation*}
    \int_2^x\left(\pi(t)-\li(t)\right)\mathrm{d}t>0
\end{equation*}
as required. The same reasoning holds for the integral over $\theta(t)-t$ using the corresponding bounds for $\theta(t)$ in Lemmas \ref{pithetalemma} and \ref{signchangelem}.

Now, suppose the Riemann hypothesis is true. To show \eqref{piineq} and \eqref{thetaineq} it suffices to consider $x>10^{19}$ in light of Lemma \ref{buthelem}. We begin with the integral over $\theta(t)-t$. By Lemmas \ref{pithetalemma}, \ref{exactints} and \ref{explicitlemma} we have
\begin{align*}
    \int_{2}^x\left(\theta(t)-t\right)\mathrm{d}t&=\int_{2}^{3000}\left(\theta(t)-t\right)\mathrm{d}t+\int_{3000}^x\left(\theta(t)-t\right)\mathrm{d}t\\
    &<-130000-\int_{3000}^x 0.98t^{1/2}\:\mathrm{d}t+\int_{3000}^x\left(\psi(t)-t\right)\mathrm{d}t\\
    &<-130000-\frac{1.96}{3}(x^{3/2}-(3000)^{3/2})+0.08x^{3/2}-\int_{2}^{3000}\left(\psi(t)-t\right)\mathrm{d}t\\
    &<-19000-0.57x^{3/2}<0,
\end{align*}
as required.
    
The integral over $\pi(t)-\li(t)$ requires more work. First we apply Lemmas \ref{pithetalemma}, \ref{exactints} and \ref{pililemma} to obtain
\begin{align}\label{mainineq}
    \int_2^x\left(\pi(t)-\li(t)\right)\mathrm{d}t&<\int_2^{3000}\left(\pi(t)-\li(t)\right)\mathrm{d}t+\int_{3000}^x\left(\pi(t)-\li(t)\right)\mathrm{d}t\notag\\
    &<-29000-\int_{3000}^x\frac{t^{1/2}}{\log t}\mathrm{d}t+\int_{3000}^x\left(\Pi(t)-\li(t)\right)\mathrm{d}t\notag\\
    &<-29000-\int_{3000}^x\frac{t^{1/2}}{\log t}\mathrm{d}t+\int_{3000}^x\left(\frac{R(t)}{\log t}+\frac{R_1(t)}{t\log^2 t}\right.\notag\\
    &\qquad\qquad\qquad+\left.\int_{3000}^t\left(\frac{R_1(u)}{u^2\log^2u}+\frac{2R_1(u)}{u^2\log^3u}\mathrm{d}u\right)\right)\mathrm{d}t,
\end{align}
using the notation from Lemma \ref{pililemma}. Now, by integration by parts and Lemmas \ref{exactints} and \ref{explicitlemma},
\begin{align}\label{partseq}
    \int_{3000}^x\left(\frac{R(t)}{\log t}+\frac{R_1(t)}{t\log^2t}\right)\mathrm{d}t&=\frac{R_1(x)}{\log x}-\frac{R_1(3000)}{\log(3000)}+\int_{3000}^x\frac{2R_1(t)}{t\log^2t}\mathrm{d}t\notag\\
    &\leq 0.08 \frac{x^{3/2}}{\log x}+370+0.16\int_{3000}^x\frac{t^{1/2}}{\log^2t}\mathrm{d}t.
\end{align}
Next,
\begin{align*}
    \int_{3000}^t\left(\frac{R_1(u)}{u^2\log^2u}+\frac{2R_1(u)}{u^2\log^3u}\right)\mathrm{d}u&\leq 0.08\int_{3000}^t\left(\frac{1}{u^{1/2}\log^2u}+\frac{2}{u^{1/2}\log^3u}\right)\mathrm{d}u\\
    &\leq 0.08\int_{3000}^t\frac{3}{u^{1/2}\log^2u}\mathrm{d}u.
\end{align*}
Since $u^{1/4}/\log^2u$ is increasing for $u\geq 3000$ we then have
\begin{equation*}
    0.08\int_{3000}^t\frac{3}{u^{1/2}\log^2u}\mathrm{d}u\leq 0.24\frac{t^{1/4}}{\log^2t}\int_{3000}^t\frac{1}{u^{3/4}}\mathrm{d}u\leq 0.96\frac{t^{1/2}}{\log^2t}.
\end{equation*}
Thus,
\begin{equation}\label{inteq3}
    \int_{3000}^t\left(\frac{R_1(u)}{u^2\log^2u}+\frac{2R_1(u)}{u^2\log^3u}\right)\mathrm{d}u\leq 0.96\frac{t^{1/2}}{\log^2 t}.
\end{equation}
Substituting \eqref{partseq} and \eqref{inteq3} into \eqref{mainineq} then gives
\begin{align}\label{finaleq}
    \int_{2}^x\left(\pi(t)-\li(t)\right)\mathrm{d}t&<-28630-\int_{3000}^x\frac{t^{1/2}}{\log t}\mathrm{d}t+0.08\frac{x^{3/2}}{\log x}+1.12\int_{3000}^x\frac{t^{1/2}}{\log^2 t}\mathrm{d}t.
\end{align}
Now, using integration by parts
\begin{equation}\label{geqint}
    \int_{3000}^x\frac{t^{1/2}}{\log t}\mathrm{d}t=\frac{2}{3}\frac{x^{3/2}}{\log x}-\frac{2}{3}\frac{(3000)^{3/2}}{\log (3000)}+\frac{2}{3}\int_{3000}^x\frac{t^{1/2}}{\log^2t}\mathrm{d}t\geq\frac{2}{3}\frac{x^{3/2}}{\log x}-14000.
\end{equation}
Moreover,
\begin{equation}\label{leqint}
    1.12\int_{3000}^x\frac{t^{1/2}}{\log^2t}\mathrm{d}t\leq 1.12\frac{x^{1/4}}{\log^2x}\int_{3000}^xt^{1/4}\:\mathrm{d}t\leq 0.9\frac{x^{3/2}}{\log^2x}.
\end{equation}
Applying \eqref{geqint} and \eqref{leqint} we have that \eqref{finaleq} reduces to
\begin{equation*}
    \int_2^x\left(\pi(t)-\li(t)\right)\mathrm{d}t<-14000-0.58\frac{x^{3/2}}{\log x}+0.9\frac{x^{3/2}}{\log^2 x}<0,
\end{equation*}
as required.

\section{Proof of Theorem \ref{tsquaredthm}}
First we note that, via a simple computation, all four integrals in question are negative for $2<x\leq 200$. Thus, we may assume throughout that $x>200$.

First we deal with the integrals involving $\pi(t)$ and $\Pi(t)$. By an explicit form of Mertens' theorem \cite[Equation (3.20)]{rosser1962approximate} we have
\begin{equation}\label{merteneq1}
    \sum_{p\leq x}\frac{1}{p}<\log\log x+B+\frac{1}{\log^2x},
\end{equation}
where $B=0.2614\ldots$. Now, by partial summation and Lemma \ref{pithetaapprox}
\begin{equation}\label{1overpeq}
    \sum_{p\leq x}\frac{1}{p}=\frac{\pi(x)}{x}+\int_2^x\frac{\pi(t)}{t^2}\mathrm{d}t>\frac{1}{\log x}+\int_2^x\frac{\pi(t)}{t^2}\mathrm{d}t.
\end{equation}
Moreover, by integration by parts and Lemma \ref{lilemma}
\begin{align}\label{loglogeq}
    \log\log x=\int_e^x\frac{1}{t\log t}\mathrm{d}t&=\frac{\li(x)}{x}-\frac{\li(e)}{e}+\int_e^x\frac{\li(t)}{t^2}\mathrm{d}t\notag\\
    &\leq\frac{1}{\log x}+\frac{2}{\log^2x}-\frac{\li(e)}{e}+\int_2^x\frac{\li(t)}{t^2}\mathrm{d}t-\int_2^e\frac{\li(t)}{t^2}\mathrm{d}t.
\end{align}
Substituting \eqref{1overpeq} and \eqref{loglogeq} into \eqref{merteneq1} gives
\begin{equation}\label{piineq2}
    \int_2^x\frac{\pi(t)-\li(t)}{t^2}\mathrm{d}t<\frac{2.5}{\log^2x}+B-\frac{\li(e)}{e}-\int_2^e\frac{\li(t)}{t^2}\mathrm{d}t<\frac{3}{\log^2x}-0.62<0
\end{equation}
as desired. For the integral involving $\Pi(t)$ we then note that by Lemma \ref{pithetalemma}
\begin{align*}
    \int_2^x\frac{\pi(t)-\li(t)}{t^2}\mathrm{d}t&=\int_2^{200}\frac{\pi(t)-\li(t)}{t^2}\mathrm{d}t+\int_{200}^x\frac{\pi(t)-\li(t)}{t^2}\mathrm{d}t\\
    &>-0.59+\int_{200}^x\frac{\Pi(t)-1.9t^{1/2}-\li(t)}{t^2}\mathrm{d}t\\
    &=-0.59+\left(\frac{3.8}{\sqrt{x}}-\frac{3.8}{\sqrt{200}}\right)+\int_2^{x}\frac{\Pi(t)-\li(t)}{t^2}\mathrm{d}t\\
    &\qquad\qquad\qquad\qquad\qquad\qquad\qquad-\int_{2}^{200}\frac{\Pi(t)-\li(t)}{t^2}\mathrm{d}t\\
    &>\int_{2}^x\frac{\Pi(t)-\li(t)}{t^2}\mathrm{d}t-0.56+\frac{3.8}{\sqrt{x}}
\end{align*}
Substituting this into \eqref{piineq2} then gives
\begin{equation*}
    \int_{2}^x\frac{\Pi(t)-\li(t)}{t^2}\mathrm{d}t<-0.06-\frac{3.8}{\sqrt{x}}+\frac{3}{\log^2x}<0.
\end{equation*}
    
We argue similarly for the integrals involving $\theta(t)$ and $\psi(t)$. In particular, we have \cite[Equation (3.23)]{rosser1962approximate}
\begin{equation*}
    \sum_{p\leq x}\frac{\log p}{p}<\log x+E+\frac{1}{\log x},
\end{equation*}
where $E=-1.332\ldots$. Then, by Lemma \ref{pithetaapprox}
\begin{equation*}
    \sum_{p\leq x}\frac{\log p}{p}=\frac{\theta(x)}{x}+\int_2^x\frac{\theta(t)}{t^2}\mathrm{d}t>1-\frac{1}{\log x}+\int_2^x\frac{\theta(t)}{t^2}\mathrm{d}t
\end{equation*}
and,
\begin{equation*}
    \log(x)=\int_1^x\frac{1}{t}\mathrm{d}t=\int_2^x\frac{t}{t^2}\mathrm{d}t+\log 2.
\end{equation*}
Therefore,
\begin{equation}\label{thetaineq2}
    \int_2^x\frac{\theta(t)-t}{t^2}\mathrm{d}t<E-1+\log 2+\frac{1.5}{\log x}<-1.63+\frac{2}{\log x}<0
\end{equation}
as required. The corresponding result for $\psi(t)$ then follows similar to before by applying Lemma \ref{pithetalemma} to \eqref{thetaineq2}. In particular, we have that
\begin{equation*}
    \int_2^x\frac{\psi(t)-t}{t^2}\mathrm{d}t<-0.83-\frac{3}{\sqrt{x}}+\frac{2}{\log x}<0.
\end{equation*}

\section{Proof of Theorem \ref{tctheorem}}

The result for the integrals involving $\Pi(t)$ and $\psi(t)$ follows immediately from Lemma \ref{signchangelem}. For the integral involving $\pi(t)$ we first note that by Lemma \ref{signchangelem}, for any choice of $\delta>0$ there exists arbitrarily large values of $x$ such that 
\begin{equation*}
    \int_{2}^x\frac{\Pi(t)-\li(t)}{t^c}\:\mathrm{d}t>Kx^{1+\omega-c-\delta}
\end{equation*} 
for some positive constant $K>0$. For such values of $x$, we then have by Lemma \ref{pithetalemma} that
\begin{align}\label{signchangeeq}
    \int_2^x\frac{\pi(t)-\li(t)}{t^c}\:\mathrm{d}t&=\int_{2}^{17}\frac{\pi(t)-\li(t)}{t^c}\:\mathrm{d}t+\int_{17}^{x}\frac{\pi(t)-\li(t)}{t^c}\:\mathrm{d}t\notag\\
    &>-\int_{17}^x1.9t^{1/2-c}\:\mathrm{d}t+\int_{17}^x\frac{\Pi(t)-\li(t)}{t^c}\:\mathrm{d}t+O(1)\notag\\
    &>-\int_{17}^x 1.9t^{1/2-c}\:\mathrm{d}t+Kx^{1+\omega-c-\delta}+O(1).
\end{align}
The integral in \eqref{signchangeeq} satisfies
\begin{equation*}
    \int_{17}^x1.9t^{1/2-c}\:\mathrm{d}t=
    \begin{cases}
        O(x^{3/2-c}),&c<3/2,\\
        O(\log x),&c=3/2,\\
        O(1),&3/2<c<1+\omega.
    \end{cases}
\end{equation*}
Thus, if we take any choice of $\delta<\omega-1/2$ when $c\leq 3/2$, and $\delta<1+\omega-c$ when $3/2<c<1+\omega$, we have
\begin{equation*}
    \int_2^x\frac{\pi(t)-\li(t)}{t^c}\mathrm{d}t>0
\end{equation*}
for arbitrarily large values of $x$ as required. The same reasoning holds for the integral over $(\theta(t)-t)/t^c$ using the corresponding bounds for $\theta(t)$ in Lemmas \ref{pithetalemma} and \ref{signchangelem}.

\section{Discussion and further work}
The general idea in this paper was to consider averaged versions of arithmetic functions in order to gain insight into biases occuring in number theory. The functions $\pi(t)-\li(t)$ and $\theta(t)-t$ in particular exhibit an apparent negative bias and our results reflect this.

There are many other biases occuring in number theory and it would be interesting to consider averaged versions of these. For example, we have:
\begin{enumerate}[label=(\alph*)]
    \item The bias in Mertens' theorems \cite{buthe2015first}, \cite{lamzouri2016bias}.
    \item The Chebyshev bias for primes in arithmetic progressions \cite{rubinstein1994chebyshev}, \cite{granville2006prime}.
    \item The bias of $\lambda(n)$ and related functions \cite{humphries2013distribution}, \cite{martin2021fake}.
\end{enumerate}

One could also attempt to extend our results to more general number fields. In this direction, it is worth noting that Garcia and Lee \cite{garcia2021unconditional} recently proved explicit versions of Mertens' theorems for number fields. Using Garcia and Lee's results could thus allow one to generalise Theorem \ref{tsquaredthm}, whose proof followed directly from an explicit version of Mertens' theorems in the standard setting.

\section{Acknowledgements}
Thanks to my supervisor Tim Trudgian and colleagues Ethan, Michaela and Shehzad for the fun and fruitful discussions on this project. 

\printbibliography

\end{document}